\theoremstyle{plain}
\newtheorem{thm}{Theorem}[section]
\newtheorem{lmm}[thm]{Lemma}
\newtheorem{cor}[thm]{Corollary}
\newtheorem{rmk}[thm]{Remark}
\newtheorem{prb}[thm]{Problem}
\theoremstyle{remark}
\def\NN{\mathbb{N}}
\def\RR{\mathbb{R}}
\def\II{\mathbb{I}}
\def\vep{\varepsilon}
\def\pmc#1{\setbox0=\hbox{#1}
    \kern-.1em\copy0\kern-\wd0
    \kern.1em\copy0\kern-\wd0}
\def\vep{\varepsilon}
\def\si{\sigma}
\def\vp{\varphi}
\def\ov{\overline}
\def\sm{\setminus}
\begin{document}
\title
{A nonaspherical cell-like 2-dimensional 
simply connected continuum and 
some
related constructions}

\author{Katsuya Eda}
\address{Department of Mathematics,
Waseda University, Tokyo 169-8555, Japan}
\email{eda@logic.info.waseda.ac.jp}

\author{Umed H. Karimov}
\address{Institute of Mathematics,
Academy of Sciences of Tajikistan,
Ul. Ainy $299^A$, Dushanbe 734063, Tajikistan}
\email{umed-karimov@mail.ru}

\author{Du\v san Repov\v s }
\address{Institute of Mathematics, Physics and Mechanics,
Faculty of Mathematics and Physics, 
University of Ljubljana, P.O.Box 2964,
Ljubljana 1001, Slovenia}
\email{dusan.repovs@fmf.uni-lj.si}

\date     
{July 20, 2008}

\keywords{Nonaspherical space, 
simple connectivity, 
Peano continuum,
cell-like space, 
trivial shape}

\subjclass[2000]{Primary: 54B15, 54G15, 54F15; Secondary: 55N10, 55Q52}

\begin{abstract}
We prove the existence of a 2-dimensional nonaspherical simply connected 
cell-like Peano continuum (the space itself was constructed
in one of our  earlier papers).
We also indicate some relations between this space and 
the well-known Griffiths' space from the 1950's.
\end{abstract}

\title
[Nonaspherical cell-like 2-dimensional continuum]   
{A nonaspherical cell-like 2-dimensional simply connected 
continuum and related constructions}   
\maketitle
\section{Introduction}

It is well-known (see \cite{F, KR}) that every $n$-dimensional compactum
is weakly homotopy equivalent to an $(n+1)$-dimensional cell-like 
compactum (i.e. a compactum with the trivial shape).
Therefore there exist nonaspherical cell-like simply connected 
compacta in all dimensions $\ge 3$.

It was heretofore unknown whether such compacta also exist in dimension
2. In this paper we give the affirmative answer to this question. 
We show that the space $SC(S^1)$ which we
constructed in our earlier paper \cite{EKR2},
is in fact, a {\sl nonaspherical} cell-like 2-dimensional 
simply connected {\sl Peano} continuum (i.e. locally connected continuum).

We also modify our original construction of the space
$SC(S^1)$ and show that the modified
construction gives a space which has the homotopy type of 
the classical well-known space \cite{GJ} from the 1950's, which is a
non-simply connected 
one-point 
union of two contractible spaces.

Our main result concerns $SC(X)$ for a non-simply connected
path-connected space $X$. 
To analyze the sigular homology $H_2(SC(X))$, we use
infinitary words and a result from \cite{E:free}. 
Although infinitary
words have already been
introduced in \cite{CannonConner:hawaii1}, they may not
be a familiar notion. 
In the special case $X=S^1$, we can prove the result only by using
finitary words - we present it at the end of Section~\ref{sec:proof}.

\section{Preliminaries}\label{sec:preliminary}

We recall the construction of the space $SC(S^1)$
from \cite{EKR2}.
Consider the so-called {\sl Topologist sine curve} $T$ and embed $T$
into the square $\II^2 = \II \times \II$ as in Figure 1, i.e. $T$ is
embeded as the union of $A_1B_1A_2B_2\cdots$ and
$AB$. 
Let $S^1$ be the circle and $s_0$  any of its points which we
consider as the base point. Consider the topological sum of $\II ^2$ and
$T\times S^1$. 
The space $SC(S^1)$ is
now defined as the quotient space of this sum, obtained by
identification of the points $(t, s_0)$ with $t\in T\subset \II^2$,
and by identification of each set $\{t\}\times S^1$ with $t$,
when $t\in \{ 0\}\times \II$. 
For an arbitrary compactum $X$, one defines the space $SC(X)$
by replacing $S^1$ everywhere above by $X$. 
For the details of the definition of $SC(X)$ 
we refer the reader to \cite{EKR2}.

The subspace $\mathbb{H} = \bigcup _{m=1}^\infty 
\{ (x,y):(x-1/m)^2+y^2 = 1/m^2\}$
of the Euclidean plane $\RR ^2$ is called the {\sl Hawaiian earring}. 
Denote $\theta = (0,0)\in \mathbb{H}$ and 
let $C(\mathbb{H})$ be the cone over $\mathbb{H}$. 
We consider $\mathbb{H}$ as the subspace of
$C(\mathbb{H})$. 
A space $\mathcal{G}$ is then defined as the one-point
union of two copies of $C(\mathbb{H})$, obtained by identifying
two copies of $\theta$ at the point $\theta$. 
This space is a well-known example of a
non-contractible space which is a one-point union of contractible
spaces -- Griffiths was the first to investigate this
kind of spaces \cite[p.190]{GJ}, where he also acknowleges ideas by James. 
The fact that $\mathcal{G}$ is aspherical was proved in
\cite{EK:preprint}. For further information of this space and its
generalizations we refer the reader to \cite{E:union, E:cones,
EK:aloha}.

Throughout the paper, we shall denote the singular homology
with integer coefficients by $H_*( )$.

\section{On nonasphericity of $SC(S^1)$ and $SC(X)$}\label{sec:proof}

Obviously, $SC(S^1)$ is a cell-like Peano continuum. It was shown
in \cite{EKR2} that this space is simply connected. Therefore it
suffices to show that $SC(S^1)$ is nonaspherical.
In order to prove this it certainly 
suffices to verify
that there exists a nontrivial 2-dimensional
singular cycle in $SC(S^1)$.
We shall prove this as a corollary of the following general result --
Theorem 3.1 below -- in the sense of \cite{EKR2}.
Our notation for $SC(X)$ is the same as in \cite{EKR2}. 

\setlength{\unitlength}{0.7mm}
\begin{picture}(110,110)
\put(0,51){\line(1,0){100}}
\put(26,0){\line(0,1){101}}
\put(26,0){\line(1,4){25}}
\put(51,0){\line(0,1){101}}
\put(51,0){\line(1,2){50}}
\put(1,1){\line(1,0){100}}
\put(1,101){\line(1,0){100}}
\put(1,1){\line(0,1){100}}
\put(101,1){\line(0,1){100}}

\put(39,47){\shortstack{$C_{4}$}}
\put(27,47){\shortstack{$C_{5}$}}
\put(52,47){\shortstack{$C_{3}$}}
\put(76,47){\shortstack{$C_{2}$}}
\put(103,47){\shortstack{$C_{1}$}}
\put(-3,-3){\shortstack{$A$}}
\put(-3,47){\shortstack{$C$}}
\put(-2,103){\shortstack{$B$}}
\put(102,103){\shortstack{$B_1$}}
\put(50,103){\shortstack{$B_2$}}
\put(25,103){\shortstack{$B_3$}}
\put(102,-3){\shortstack{$A_1$}} 
\put(50,-3){\shortstack{$A_2$}} 
\put(25,-3){\shortstack{$A_3$}} 
\put(100,0){\shortstack{$\bullet$}} 
\put(50,0){\shortstack{$\bullet$}} 
\put(25,0){\shortstack{$\bullet$}} 
\put(25,100){\shortstack{$\bullet$}} 
\put(50,100){\shortstack{$\bullet$}} 
\put(100,100){\shortstack{$\bullet$}}
\put(100,50){\shortstack{$\bullet$}} 
\put(75,50){\shortstack{$\bullet$}} 
\put(50,50){\shortstack{$\bullet$}} 
\put(37.5,50){\shortstack{$\bullet$}} 
\put(25,50){\shortstack{$\bullet$}} 
\put(0,50){\shortstack{$\bullet$}} 
\put(0,100){\shortstack{$\bullet$}}
\put(0,0){\shortstack{$\bullet$}} 
\end{picture}

\medskip

\qquad
Figure 1

\medskip

Consider Figure 1: the 
piecewise linear line $A_1B_1A_2B_2\cdots $ with the
segment $A B$ in this figure is the PL
Topologist sine curve which was used to build $SC(X)$,
i.e. along which we attached the ``infinite tube".

\begin{thm}\label{thm:main1}
Let $X$ be any path-connected space. Then the following assertions hold:
\begin{itemize}
\item[(1)] If $X$ is not simply connected, then $H_2(SC(X))$ is not trivial; and 
\item[(2)] If $\pi _1(X)$ and $\pi _2(X)$ are trivial, then $H_2(SC(X))$ is also trivial. 
\end{itemize}
\end{thm}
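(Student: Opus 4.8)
The plan is to reduce the computation of $H_2(SC(X))$ to that of a single quotient space, and then treat the two assertions by, respectively, \emph{constructing} and \emph{obstructing} a distinguished $2$-cycle. Since $\II^2$ is contractible and the section $T\times\{s_0\}$ can be thickened to a fibred neighborhood $T\times U$ ($U$ a neighborhood of $s_0$ in $X$) that deformation retracts onto it, the pair $(SC(X),\II^2)$ is good, so collapsing $\II^2$ induces isomorphisms $H_n(SC(X))\cong\tilde H_n(W)$ for all $n\ge 1$, where $W:=SC(X)/\II^2$. Unwinding the identifications, $W=(T\times X)/\bigl((T\times\{s_0\})\cup(AB\times X)\bigr)$: over the oscillating ray $T_0=A_1B_1A_2B_2\cdots$ each fibre is a copy of $X$ reduced at $s_0$, and these fibres collapse to a single point $*$ as one approaches the limit segment $AB$. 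Thus $W$ is a \emph{wild}, shrinking union of reduced suspension-type cells modelled on $X$, one per oscillation, accumulating at $*$; this is precisely the setting for which the infinitary words of \cite{CannonConner:hawaii1} and the result of \cite{E:free} are designed.

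First I would record that $SC(X)$ is simply connected for every path-connected $X$: any loop in a fibre $\{t\}\times X$ can be slid along the zigzag toward $AB$, where the fibre is crushed, so the image of $\pi_1(X)$ dies in $SC(X)$, while $\II^2$ contributes nothing. Fix $g\in\pi_1(X,s_0)$ represented by $\ell\colon S^1\to X$. For the $k$-th oscillation let $\gamma_k$ be the sub-arc of $T_0$ traversing one tooth, and let $c_k\colon\gamma_k\times S^1\to SC(X)$, $(s,\theta)\mapsto(\gamma_k(s),\ell(\theta))$, be the cylinder swept by $\ell$ along the tube. Consecutive boundary loops agree, so in the telescope $\sum_k c_k$ the intermediate boundaries cancel; because the teeth accumulate on $AB$ their diameters shrink to $0$, so the $c_k$ fit together into a single continuous map of a compact $2$-dimensional complex (an infinite, shrinking mapping telescope capped at $AB$ by the crush), whose image is a singular $2$-chain $Z_g$ whose only boundary is the loop $\{A_1\}\times\ell$ at the free end. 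Since $SC(X)$ is simply connected, $\{A_1\}\times\ell$ bounds a singular $2$-chain $d$; then $z_g:=Z_g+d$ is a bona fide singular $2$-cycle, essentially a map of a (wild) $2$-sphere.

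For assertion (1) it remains to prove that $z_g$ is not a boundary when $g\neq1$; this is the \textbf{main obstacle}. Naively the class of $z_g$ lies in a suspension and would be visible only to $H_1(X)$, but the theorem must detect $\pi_1(X)$ itself (for instance $X$ may have perfect $\pi_1$ and trivial $H_1$), so the tame suspension isomorphism is insufficient and one must exploit the wildness at $*$. The plan is to encode the infinite concatenation of the vanishing loops as an infinitary word and apply \cite{E:free}: a suitable homomorphism, built from the infinitary-word structure at the accumulation point, sends the class of $z_g$ to an element that is nonzero precisely when $g\neq1$. Checking that this invariant is well defined on $H_2$ (independent of the chain representative and unaffected by the shrinking tail) and nonvanishing is the technical heart; for $X=S^1$ the group $\pi_1(S^1)=\ZZ$ is free, so finitary words already suffice and the argument simplifies, which is the special case announced at the end of the section.

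For assertion (2), suppose $\pi_1(X)=\pi_2(X)=0$; by the Hurewicz theorem $X$ is $2$-connected, so $\tilde H_0(X)=H_1(X)=H_2(X)=0$. I would then compute $\tilde H_2(W)$ from the oscillation filtration (or a Mayer--Vietoris argument separating the free end from the wild end): the degree-$2$ contributions come from $H_2(X)$ (the fibre direction) and from $H_1(X)$ (the suspension direction), together with a Milnor-type $\varprojlim^1$ term over the infinite telescope of these groups. All of $H_1(X)$, $H_2(X)$, and hence their $\varprojlim^1$, vanish, so $\tilde H_2(W)=0$ and therefore $H_2(SC(X))=0$. The only delicate point here is, once more, the wild limit at $*$: one must verify that the accumulation introduces no extra degree-$2$ class, which the vanishing of the relevant $\varprojlim^1$ (or a direct infinitary-word computation parallel to that in (1)) guarantees.
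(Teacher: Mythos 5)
Your proposal correctly identifies the landscape (the wild accumulation along $AB$, the need for infinitary words and \cite{E:free}, and the key observation that $H_1(X)$ alone cannot detect a perfect $\pi_1(X)$), but it has genuine gaps at exactly the decisive steps. In assertion (1), the nontriviality of your class $z_g$ is not proved: you defer it to ``a suitable homomorphism, built from the infinitary-word structure'' that is never constructed, and this is precisely the content of the theorem, not a technicality. Worse, $Z_g$ as you describe it is not a singular chain at all: singular chains are \emph{finite} sums of simplices, and a continuous map from the compactified infinite telescope (which is not a finite complex) does not by itself yield one. The paper avoids both problems by never constructing a $2$-cycle. It covers $SC(X)$ by the open sets $Y_0=p^{-1}(\II\times[0,2/3))$ and $Y_1=p^{-1}(\II\times(1/3,1])$ and uses exactness of Mayer--Vietoris: $\ker\bigl(H_1(Y_0\cap Y_1)\to H_1(Y_0)\oplus H_1(Y_1)\bigr)=\operatorname{im}\partial$, so it suffices to exhibit a nonzero class in $H_1(Y_0\cap Y_1)$ killed by both inclusions. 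That class is carried by a single honest loop: the infinite concatenations $g_0=f_{C_1}f_{C_2}^-f_{C_3}f_{C_4}^-\cdots$ and $g_1=f_{C_1}^-f_{C_2}f_{C_3}^-f_{C_4}\cdots$ (continuous because the ranges shrink to $C$). The product $g_0g_1$ dies in $Y_0$ and in $Y_1$ by telescoping cancellation, and its nonvanishing in $H_1(Y_0\cap Y_1)$ is exactly the statement that the infinitary word $c_1c_2^{-1}c_3c_4^{-1}\cdots c_1^{-1}c_2c_3^{-1}c_4\cdots$ does not lie in the commutator subgroup of the free $\sigma$-product; this is proved via the canonical commutator form of \cite[Lemma 4.11]{E:free}: such a product would force some factor of infinite length well-ordered from right to left, impossible since the word is well-ordered from left to right. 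Note that every letter occurs once with each sign, so every ``letter-counting'' invariant vanishes on this word; any homomorphism of the kind you hope for must see the \emph{order} structure, which is why this step cannot be waved through.

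There are two further gaps. First, your opening reduction $H_n(SC(X))\cong\tilde H_n(SC(X)/\II^2)$ needs $(SC(X),\II^2)$ to be a good pair, and your thickening $T\times U$ deformation retracting to $T\times\{s_0\}$ requires a neighborhood of $s_0$ that deformation retracts to $s_0$ in $X$ --- false for an arbitrary path-connected $X$ (take $s_0$ to be the wild point of the Hawaiian earring), whereas the theorem assumes no local niceness; the Mayer--Vietoris sequence for open covers needs no such hypothesis. Second, in assertion (2) your Milnor-type ${\varprojlim}^1$ computation is not available: singular homology of such wild compacta does not satisfy telescope/$\varprojlim^1$ formulas --- the failure of exactly this kind of naive limit argument is the raison d'\^etre of this circle of papers. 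The paper's actual argument is: $\pi_1(Y_0\cap Y_1)$ is the free $\sigma$-product of trivial groups, hence trivial, so $H_1(Y_0\cap Y_1)=0$; and $H_2(Y_0)\cong\pi_2(Y_0)\cong\prod_{n}\pi_2(X_n)=0$ by Hurewicz together with \cite[Theorem 1.1]{EK:aloha}. The input that rules out ``extra degree-$2$ classes at the wild point'' is that nontrivial theorem about homotopy groups of infinite one-point unions, not a $\varprojlim^1$ vanishing.
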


\begin{cor}\label{cor:circle}
The space
$SC(S^1)$ 
is a nonaspherical cell-like 2-dimensional simply connected Peano
continuum.
\end{cor}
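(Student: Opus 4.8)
The plan is to derive the Corollary directly from Theorem~\ref{thm:main1} by specializing to $X=S^1$ and then assembling the separately-established properties of $SC(S^1)$. First I would recall what has already been granted in the text immediately preceding the theorem: that $SC(S^1)$ is a cell-like Peano continuum (this is asserted as ``obvious'' from the quotient construction), that it is simply connected (proved in \cite{EKR2}), and that it is $2$-dimensional. Thus the only genuinely new ingredient needed is \emph{nonasphericity}, and the strategy is to show that $\pi_2(SC(S^1))\neq 0$, or rather the weaker statement that suffices for nonasphericity, namely that some higher homotopy group is nontrivial.

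The key step is to invoke part~(1) of Theorem~\ref{thm:main1} with $X=S^1$. Since $\pi_1(S^1)\cong\ZZ$ is nontrivial, $S^1$ is path-connected but not simply connected, so the hypothesis of (1) is met and we conclude $H_2(SC(S^1))\neq 0$. I would then argue that a space which is simply connected yet has nontrivial second singular homology cannot be aspherical: if $SC(S^1)$ were aspherical (a $K(\pi,1)$) with trivial $\pi_1$, it would be weakly contractible, hence by the Hurewicz theorem all its reduced integral homology groups would vanish, contradicting $H_2(SC(S^1))\neq 0$. Concretely, since $\pi_1=0$, the Hurewicz homomorphism $\pi_2(SC(S^1))\to H_2(SC(S^1))$ is an isomorphism, so $H_2\neq 0$ forces $\pi_2\neq 0$, which is precisely the failure of asphericity.

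The main obstacle is not in this final assembly, which is short, but in making sure the Hurewicz argument is applied legitimately: the classical Hurewicz isomorphism theorem requires the space to be (at least) path-connected and simply connected, and for singular homology it applies to arbitrary topological spaces in the form ``if $X$ is simply connected then $\pi_2(X)\to H_2(X)$ is an isomorphism,'' so no CW or local niceness hypothesis beyond what $SC(S^1)$ already possesses is needed. I would therefore state explicitly that simple connectivity of $SC(S^1)$ (from \cite{EKR2}) together with $H_2(SC(S^1))\neq 0$ from Theorem~\ref{thm:main1}(1) yields $\pi_2(SC(S^1))\cong H_2(SC(S^1))\neq 0$. Finally I would collect the remaining adjectives --- cell-like, $2$-dimensional, simply connected, Peano continuum --- each of which is either evident from the construction or cited from \cite{EKR2}, to complete the verification that $SC(S^1)$ has all the asserted properties simultaneously.
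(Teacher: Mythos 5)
Your proposal is correct and follows essentially the same route as the paper: the paper likewise collects the already-established facts (cell-like Peano continuum, simple connectivity from \cite{EKR2}, dimension two) and reduces nonasphericity to exhibiting a nontrivial $2$-dimensional singular cycle, which is exactly Theorem~\ref{thm:main1}(1) applied to $X=S^1$. Your explicit Hurewicz justification (simply connected plus $H_2\neq 0$ forces $\pi_2\neq 0$) merely spells out the step the paper leaves implicit, so there is no substantive difference.
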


For the proof of Theorem~\ref{thm:main1}, we recall a notion of
the free $\sigma$-product of groups and a lemma from \cite{E:free}. 
Let $(X_i,x_i)$ be any family of pointed spaces such that 
$X_i \cap X_j = \emptyset $, for $i \neq j.$ 
The underlying set of a pointed space 
$(\widetilde{\bigvee}_{i\in I}(X_i,x_i),x^*)$ 
is the union of all $X_i$ s obtained by identifying all $x_i $ to a point
$x^*$ and the topology is defined by specifying the neighborhood bases as 
follows:

\begin{itemize}
\item[(1)] If $x \in X_{i} \setminus \{x_{i}\}$, then the neighborhood base
of $x$ in $\widetilde{\bigvee}_{i\in I}(X_i,x_i)$ is the one of $X_i$; 
\item[(2)] The point $x^*$ has a neighborhood base, each element of which is
of the form:  $\widetilde{\bigvee}_{i\in I \setminus F}(X_i,x_i) {\vee} 
{\bigvee}_{j \in F}U_{j},$ where $F$ is a finite subset of $I$ and each $U_j$
is an open neighborhood of $x_j$ in $X_j$ for $j \in F$. 
\end{itemize}

\begin{lmm}\label{lmm:first}\cite[Theorem A.1]{E:free} 
Let $X_i$ be locally simply-connected and first countable at $x_n$ for
 each $i$. Then 
 $$\pi _1(\widetilde{\bigvee}_{i\in I}(X_i,x_i),x^*)
\simeq \pmc{$\times$}\, \, \; _{i\in I}^{\si}\pi _1(X_i,x_i).$$  
In particular $I = \mathbb{N}$, 
 $$\pi _1(\widetilde{\bigvee}_{n\in\mathbb{N}}(X_n,x_n),x^*)
\simeq \pmc{$\times$}\, \, \; _{n\in\mathbb{N}}\pi _1(X_n,x_n).$$  
\end{lmm}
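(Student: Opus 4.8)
The plan is to build an explicit map
$\Phi:\pi_1(\widetilde{\bigvee}_{i\in I}(X_i,x_i),x^*)\to \times_{i\in I}^{\si}\pi_1(X_i,x_i)$
by reading off from each based loop the (possibly infinite) word it traces through the factors, and then to show that $\Phi$ is an isomorphism. First I would fix a loop $\ell\colon[0,1]\to\widetilde{\bigvee}_i(X_i,x_i)$ with $\ell(0)=\ell(1)=x^*$ and consider the closed set $\ell^{-1}(x^*)$. Its complement is a countable union of disjoint open intervals. Since the sets $X_i\sm\{x_i\}$ are relatively open, pairwise disjoint, and partition the complement of $x^*$, the image of each such interval lies in a single $X_i$; restricting $\ell$ to the closure of the interval and sending its endpoints to $x_i$ yields a loop in $(X_i,x_i)$ and hence an element of $\pi_1(X_i,x_i)$. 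Ordering the intervals by their position in $[0,1]$ and discarding those carrying the identity, I obtain a word $W_\ell$ whose index set is a countable linearly ordered set, and I set $\Phi([\ell])$ to be its reduced form.

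The crucial structural point is that $W_\ell$ is a genuine $\si$-word, i.e.\ for each fixed $i$ only finitely many letters come from $\pi_1(X_i,x_i)$, and this is exactly where the hypotheses enter. Suppose infinitely many excursions into one factor $X_i$ were nontrivial. Using local simple connectivity at $x_i$, choose a neighbourhood $U_i$ of $x_i$ in which every loop is nullhomotopic in $X_i$; then each such excursion must meet the complement of $U_i$, giving points $t_n$ with $\ell(t_n)\notin U_i$. These points accumulate at some $t^*\in[0,1]$, and since the underlying intervals shrink to $t^*$ we get $\ell(t^*)=x^*$. But $V:=\widetilde{\bigvee}_{k\neq i}(X_k,x_k)\vee U_i$ is, by the very definition of the neighbourhood basis at $x^*$, a neighbourhood of $x^*$; continuity of $\ell$ at $t^*$ then pushes a whole neighbourhood of $t^*$ into $V$, so the nearby excursions into $X_i$ remain inside $V\cap X_i=U_i$, contradicting $\ell(t_n)\notin U_i$. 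This compactness-and-continuity argument, which genuinely needs \emph{both} the local simple connectivity of the factors \emph{and} the special topology at $x^*$, is what confines each factor to finitely many letters.

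Next I would verify the algebraic properties of $\Phi$. The homomorphism property is formal: concatenation of loops corresponds to placing the two ordered index sets end to end. Surjectivity is constructive: a $\si$-word involves only countably many factors, so I would distribute representative loops of its letters over a sequence of subintervals of $[0,1]$ arranged in the prescribed order, invoking first countability at each $x_i$ together with the neighbourhood basis at $x^*$ to guarantee that the assembled map is continuous at the accumulation points and hence is a bona fide loop realizing the given word.

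I expect the main obstacle to be well-definedness on homotopy classes together with injectivity — equivalently, that two loops are homotopic rel basepoint exactly when their words share a reduced form, and that a loop whose word reduces to the empty word is nullhomotopic. This forces one to develop the reduction theory of $\si$-words (each $\si$-word possessing a canonical reduced representative obtained by transfinitely cancelling inverse letters and amalgamating adjacent letters from the same factor) and to match \emph{algebraic} reduction with \emph{geometric} homotopy. The difficulty is that homotopies in these non-locally-simply-connected wedges can be wild, so one cannot argue cell by cell; instead I would track how a given homotopy acts on the base-point preimage, showing it can alter a word only through the permitted reductions, and conversely realize each reduction by an explicit homotopy. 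This invariance-and-faithfulness step, rather than the construction of $\Phi$, is where essentially all the work lies — precisely as in the Hawaiian earring, whose computation $\pi_1(\mathbb{H})\cong\times_{n\in\NN}\ZZ$ is recovered as the special case $I=\NN$ with each $\pi_1(X_n,x_n)=\ZZ$.
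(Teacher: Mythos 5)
First, note that the paper contains no proof of this lemma at all: it is quoted verbatim, with attribution, from \cite[Theorem A.1]{E:free}, so there is no internal argument to compare yours against; the relevant benchmark is Eda's appendix proof in that reference. Against that benchmark, your setup is the natural one and the parts you actually carry out are sound: the word-reading map $\Phi$, the compactness argument showing each loop yields a genuine $\sigma$-word (your use of the basic neighborhoods $\widetilde{\bigvee}_{k\neq i}(X_k,x_k)\vee U_i$ is exactly right), and the realization of an arbitrary $\sigma$-word by a loop, which gives surjectivity.

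The genuine gap is the step you yourself flag as ``where essentially all the work lies'': well-definedness of $\Phi$ on homotopy classes together with injectivity. This is not a deferrable verification --- it \emph{is} the theorem --- and your outline of it cannot succeed as stated, because it never uses the hypotheses in the form that is actually needed. Throughout, you invoke only the weak form of local simple connectivity: a neighborhood $U_i$ of $x_i$ all of whose loops are null-homotopic \emph{in $X_i$}. The Griffiths space $\mathcal{G}=C(\mathbb{H})\vee C(\mathbb{H})$, discussed in Section 4 of this very paper, satisfies that weak form at the wedge point (each cone is contractible) and is first countable there; both factors have trivial $\pi_1$, so every loop in $\mathcal{G}$ reads off the empty word, and your claimed equivalence ``word reduces to the empty word $\Longleftrightarrow$ loop is null-homotopic'' would force $\pi_1(\mathcal{G})$ to be trivial --- but it is not (this is Griffiths' theorem, and the reason this paper contrasts $\mathcal{G}$ with $SC(S^1)$). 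Consequently, any correct proof of faithfulness must use the \emph{strong} reading of the hypothesis --- every neighborhood of $x_i$ contains a neighborhood $V_i$ whose loops are null-homotopic \emph{within $V_i$} --- together with first countability (to extract a nested basis of such $V_i$), and must thread these into the analysis of arbitrary, possibly wild, homotopies. Your proposed device of ``tracking how the homotopy acts on the base-point preimage'' makes no reference to either hypothesis, so it would prove the statement for $\mathcal{G}$ as well; it therefore proves too much and must fail. Supplying that step is precisely the hard content of Eda's Theorem A.1, and it is missing from your proposal.
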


We also need basic descriptions of paths and loops. 
A loop $f:\II \to X$ is a continuous map with
$f(0)=f(1)$. For a loop $f$, $f^-$ denotes the loop defined by: 
$f^-(t) = f(1-t)$. 
For loops $f,g$ with the same base point, the concatenation $fg$ is
a loop defined by: $fg(t) = f(2t)$ for $0\le t\le 1/2$ and $fg(t)
=g(2t-1)$ for $1/2\le t\le 1$. 
We denote the homotopy class relative to end points of a loop $f$ by
$[f]$ and the homology class of $f$ by $[f]_s$.

\medskip
{\it Proof of\/} Theorem~\ref{thm:main1}.
Let $p$ be the natural projection of $SC(X)$ onto
$\II^2$ which we consider as a subspace of the plane $\RR^2$.

Let $Y_0 = p^{-1}(\II\times [0,2/3))$ and $Y_1 = 
p^{-1}(\II\times (1/3,1])$. 
Then $SC(X)= Y_0\cup Y_1$ and $Y_0\cap Y_1$ is open in $SC(X)$.

Consider the following Mayer-Vietoris homology exact sequence:
\[
H_2(SC(X)) \overset{\partial}\longrightarrow 
H_1(Y_0\cap Y_1)\overset{h}\longrightarrow 
H_1(Y_0)\oplus H_1(Y_1).  
\]
We let $i_0: Y_0\cap Y_1 \to Y_0$ and $i_1:Y_0\cap Y_1\to
 Y_1$ be the inclusion maps. Then $h = i_{0*} - i_{1*}$. 
 
We now present the proof of property (1) above.
We first observe that non-injectivity of $h$ implies
that $H_2(SC(X))$ is non-trivial.

Since $p^{-1}(\II\times\{ 0\})$, $p^{-1}(\II\times\{ 1/2\})$,
$p^{-1}(\II\times\{ 1\})$ are strong deformation retracts of
$Y_0$, $Y_0\cap Y_1$ and $Y_1$ respectively, 
the homotopy types of $Y_0$, $Y_1$ and $Y_0\cap Y_1$
have the same homotopy type as  $p^{-1}(\II\times \{ 0\})$. 
We denote the deformation retractions by $r_0:Y_0\to p^{-1}(\II\times\{
0\})$ and $r_1:Y_1\to p^{-1}(\II\times\{ 1\})$.

Choose a point $x^{\#}\in X$ and form a one point union $(X,x^{\#})\vee
(\II ,0)$ under the identification of $x^{\#}$ and $0$. Let $X_n$ s be copies
of $(X,x^{\#})\vee (\II ,0)$ and $x_n$ s copies of $1\in \II$. 
Then the space $p^{-1}(\II\times \{ 0\})$ has the same homotopy type 
$Y = \widetilde{\bigvee}_{n\in\mathbb{N}}(X_n,x_n)$. 
Hence $(X_n,x_n)$ is locally simply connected and first countable at $x_n$. 
Lemma~\ref{lmm:first} implies that 
$\pi _1(Y) \simeq \pmc{$\times$}\, \, \; _{n\in\mathbb{N}}\pi _1(X_n,x_n)$.

Since $X$ is not simply connected, we can find
an essential loop $f$ in $X$ whose base point is $x^{\#}$. 
Observe that $p^{-1}(\{ P\})$ is a copy
of $X$ for each point $P$ on $A_1B_1A_2B_2\cdots$. 
A point $P$ on $A_1B_1A_2B_2\cdots$ is written as $P = (x,y)$ for
$x,y\in \II$. Define 
\[
 f_P(t) =\left\{ \begin{array}{ll}
(3xt,y), &\quad\mbox{ for } 0\le t\le 1/3 \\ 
(P,f(3(t-1/3))), & \quad\mbox{ for } 1/3 \le t \le 2/3 \\
(3(1-t)x,y), & \quad\mbox{ for } 2/3 \le t \le 1.
 \end{array}
\right. 
\]

Then for $n\ge 1$, $f_{A_n}$ is a loop in $p^{-1}(\II\times\{ 0
\})\subseteq Y_0$ with the base point $A$ and 
$f_{B_n}$ one in $p^{-1}(\II\times\{ 1\})\subseteq Y_1$ with the base
point $B$ and $f_{C_n}$ one in $p^{-1}(\II\times\{ 1/2 \})\subseteq Y_0\cap
Y_1$ with the base point $C$ respectively. 
Since the images of $f_{C_n}$ s converge to $C$, we have two loops $g_0 =
 f_{C_1}f_{C_2}^- f_{C_3}f_{C_4}^-\cdots $ and $g_1 =
 f_{C_1}^-f_{C_2} f_{C_3}^-f_{C_4}\cdots $ in $Y_0\cap Y_1$. 
(These infinite concatenations make sense, since the ranges of loops
converge to $C$.)

Observe that 
$r_{0*}\circ i_{0*}([f_{C_1}]) = [f_{A_1}]$,  $r_{1*}\circ
i_{1*}([f_{C_1}]) = [f_{B_1}]$, $r_{0*}\circ i_{0*}([f_{C_{2n}}]) =
[f_{A_{n+1}}] = r_{0*}\circ i_{0*}([f_{C_{2n+1}}])$ and
$r_{1*}\circ i_{1*}([f_{C_{2n-1}}])= [B_n] =r_{1*}\circ
i_{1*}([f_{C_{2n}}])$ for each natural number $n$.

Since we have homotopies from $f_{A_{n+1}}^-f_{A_{n+1}}$
to the constant $A$ and the images of the homotopies
converge to $A$, it follows that $r_{0*}\circ i_{0*}([g_1]) = [f_{A_1}]$ and
$r_{0*}\circ i_{0*}([g_2]) =[f_{A_1}^-]$. 
Hence $i_{0*}([g_0g_1]) = e$. Similarly,
$r_{1*}\circ i_{1*}([g_0]) = e$ and $r_{1*}\circ i_{1*}([g_1]) = e$ and
hence $r_{1*}\circ i_{1*}([g_0g_1]) = e$. 
Now we have $i_{0*}([g_0g_1]_s) = 0$ and
$i_{1*}([g_0g_1]_s) = 0$, i.e. $h([g_0g_1]_s) = 0$.

It suffices to show that $[g_0g_1]_s$ is non-zero, i.e. that
$[g_0g_1]$ does not belong to the commutator subgroup of $\pi _1(Y_0\cap
Y_1)$. The isomorphism from $\pi _1(Y_0\cap Y_1)$ to
$\widetilde{\bigvee}_{n\in\mathbb{N}}(X_n,x_n)$ maps 
$[g_0g_1]$ to $c_1c_2^{-1}c_3c_4^{-1}\cdots c_1^{-1}c_2c_3^{-1}c_4\cdots$,
where $c_n$ is the letter corresponding to $[f_{C_n}]$. 
To show the
conclusion by contradiction, suppose that $c_1c_2^{-1}c_3c_4^{-1}\cdots
c_1^{-1}c_2c_3^{-1}c_4\cdots$ belongs to the commutator subgroup. Then,
by \cite[Lemma 4.11]{E:free} there exist non-empty reduced words
$U_1,\cdots, U_{2m}$ such that $c_1c_2^{-1}c_3c_4^{-1}\cdots
c_1^{-1}c_2c_3^{-1}c_4\cdots = U_1\cdots U_{2m}$, where $U_1,\cdots, U_{2m}$
is of the canonical commutator form, i.e. there are $j_l,k_l$ 
such that $\{j_1,\cdots j_m\}\cup \{ k_1,\cdots ,k_m\} = \{ 1,\cdots
,2m\}$, $U_{j_l} = U_{k_l}^{-1}$ and the reduced word
$c_1c_2^{-1}c_3c_4^{-1}\cdots c_1^{-1}c_2c_3^{-1}c_4\cdots $ is obtained
by multiplying the rightmost elements $U_i$ and the 
leftmost elements of $U_{i+1}$ at most $(2m\! -\! 1)$-times. 
Therefore, $W_{2m}$ is of infinite length and is well-ordered from
the left to the right,
and hence there exists $U_i$ which is of 
infinite length and is well-ordered from the right to the left. But this
is impossible, because $c_1c_2^{-1}c_3c_4^{-1}\cdots
c_1^{-1}c_2c_3^{-1}c_4\cdots $ is well-ordered from the left to the
right.

Next we show the second statement (2). Suppose that $\pi _1(X)$ and $\pi
_2(X)$ are trivial. Consider another part of the Mayer-Vietoris
sequence: 
\[
H_2(Y_0)\oplus H_2(Y_1) \longrightarrow 
H_2(SC(X)) \overset{\partial}\longrightarrow 
H_1(Y_0\cap Y_1). 
\]
By $\pi _1(Y_0\cap Y_1) \simeq \pmc{$\times$}\, \, \; _{n\in\mathbb{N}}\pi _1(X_n,x_n)$, 
we conclude that $\pi _1(Y_0\cap Y_1)$ is trivial. Hence 
$H_1(Y_0\cap Y_1)$ is trivial. Since $\pi _1(Y_0)$ is 
trivial, it follows that $H_2(Y_0)$ is isomorphic to $\pi _2(Y_0)$. 
Now we have $H_2(Y_0) = \pi _2(Y_0)\simeq \Pi _{n\in\mathbb{N}} \pi _2(X_n,x_n) 
= \{ 0\}$ by \cite[Theorem 1.1]{EK:aloha}. 
Similarly, $H_2(Y_1) = 0$ and $H_2(Y_0)\oplus H_2(Y_1) = \{ 0\}$. 
Now the above exact sequence implies that $H_2(SC(X))$ is trivial.  
\qed

\smallskip
We denote the commutator $aba^{-1}b^{-1}$ by $[a,b]$.

\smallskip
{\it Alternative proof of\/} Corollary~\ref{cor:circle}.
For the case $X = S^1$ we take $c_n$ as the generator of the fundamental
group of $X_{C_n}$, which is isomorphic to $\mathbb{Z}$. 
As in the preceding proof of Theorem~\ref{thm:main1},
it suffices to show that the element $c = c_1c_2^{-1}c_3c_4^{-1}\cdots
c_1^{-1}c_2c_3^{-1}c_4\cdots$ does not belong to the commutator
subgroup of the group $\pi _1(Y_0\cap Y_1)$. To 
prove this by contradiction, suppose that
$c$ belongs to the commutator subgroup, i.e. 
$c$ is a product of $m$ commutators for some $m$.

Consider natural homomorphism $f: \pi_1(Y_0 \cap Y_1) \to
\pi_1(\bigvee_{1\leq i \leq 2m+2}(X_{C_i}, C_i)),$ where $X_{C_i} =
S^1$. The group $\pi_1(\bigvee_{1\leq i \leq
2m+2}(X_{C_i}, C_i))$ is 
a free group with $2m+2$-generators
$\langle c_1,c_2,\cdots ,c_{2m+1},c_{2m+2}\rangle.$ We have

$$f(c) =
c_1c_2^{-1}\cdots c_{2m+1}c_{2m+2}^{-1}c_1^{-1}c_2\cdots
c_{2m+1}^{-1}c_{2m+2}.$$

Let $d_1 = c_1, d_2 = c_2^{-1},$ $d_{2k-1} =
c_{2k-2}^{-1}c_{2k-3}\cdots c_2^{-1}c_1c_{2k-1}$ and
$d_{2k} = c_{2k}^{-1}c_{2k-1}$.

It is easy to prove by induction the
equality $c_1c_2^{-1}\cdots c_{2k-1}c_{2k}^{-1} c_1^{-1}c_2\cdots
c_{2k-1}^{-1}c_{2k} = [d_1,d_2]\cdots [d_{2k-1},d_{2k}]$.

Since $( d_1,d_2,\cdots ,d_{2m+1},d_{2m+2} )$ is obtained by a
Nielsen transformation \cite[p.5]{LS} from $( c_1,c_2,\cdots
,c_{2m+1},c_{2m+2})$, the set $\{ d_0,d_1,\cdots ,d_{2m},d_{2m+2}\}$
generates the free group $\langle  c_1,c_2,\cdots
,c_{2m+1},c_{2m+2}\rangle$.
It follows from this and by \cite[Proposition 6.8, p.55]{LS} (see also
\cite{Cu}, p.137)  that $f(c)$ 
cannot be presented as a product of less than 
$m+1$ commutators. This contradicts our assumption. \qed

\section{A PL Model for $SC(S^1)$ and Some Related Constructions}
In this section we demonstrate piecewise linear constructions which are
similar to $SC(S^1)$, using parameters for oscillations of a
tube. Actually we prove in Theorem~\ref{thm:main2} that they are
homotopy equivalent to the point, $SC(S^1)$, or $\mathcal{G}$ depending
on their parameters.

For $0\le y \le 1$ and $\vep\ge 0$ with $0 < y+\vep\le 1$, we construct a
space $S(y,\vep )\subseteq \RR^3$ as follows. 
Consider the following points on $\II ^2$ for $n\in \NN$ (see
Figure 2), where we regard $\II ^2 \subseteq \RR ^2$ as $\II ^2\times \{
0\}$:

$A_n = (\displaystyle{\frac{1}{2n-1}}, \ 0)$, \;
$B_n = (\displaystyle{\frac{1}{2n}}, \ 1)$, \; 
$C_n = (\displaystyle{\frac{1}{2n-1}, \ y + \frac{\vep}{2n-1}})$,

$D_n = (\displaystyle{\frac{1}{2n},\ 1 -y - \frac{\vep}{2n}})$, \;
$E_n = (\displaystyle{\frac{1}{2n-1},\ \frac{1}{2}(y +
\frac{\vep}{2n-1}}))$,

$F_n = (\displaystyle{\frac{1}{2n},\ \frac{1}{2} (2 - y -
\frac{\vep}{2n}}))$.

Let $\ov{E}_n$ and $\ov{F}_n$ be points on the plane
$\{(z, x, y)\in \RR^3 |\ z = \frac{1}{2}x\}$
the projections of which to the plane
${\RR}^2$ are points $E_n$ and $F_n$ respectively, i.e.,

$\ov{E}_n = (\displaystyle{\frac{1}{2n-1},\ \frac{1}{2}(y +
\frac{\vep}{2n-1}), \ \frac{1}{2(2n-1)}})$, \;
$\ov{F}_n = (\displaystyle{\frac{1}{2n},\ \frac{1}{2} (2 - y -
\frac{\vep}{2n}), \ \frac{1}{4n}})$.

Let $H_{2n-1}$ be the convex hull of the points $A_n, B_n, C_n, D_n,
\ov{E}_n$ and $\ov{F}_n$ and $H_{2n}$ the convex hull of the
points $A_{n+1}, B_n, C_{n+1}, D_n, \ov{F}_n$ and $\ov{E}_{n+1}$.

Let $H_{\infty}$ be the set $\cup_{n=1}^{\infty}H_n$ and $\partial
H_{\infty}$ its boundary. Let $\Delta A_1C_1\ov{E}_1$ be an open triangle in 
 $\partial H_{\infty}$. Finally,
 define $S(y, \vep )$ to be the subspace 
$(\II ^2\times \{ 0\} )\cup \partial H_{\infty}\setminus \Delta
A_1C_1\ov{E}_1$ of $\RR^3.$

\medskip
\setlength{\unitlength}{0.7mm}
\begin{picture}(130,130)
\put(120,3.5){\makebox(1,1)[r]{$\bullet$}}\put(120,0){\makebox(1,1)[l]{$A_1$}}
\put(41.75,3.5){\makebox(1,1)[r]{$\bullet$}}\put(42,0){\makebox(1,1)[l]{$A_2$}}
\put(21,3.5){\makebox(1,1)[r]{$\bullet$}}\put(21,0){\makebox(1,1)[l]{$A_3$}}
\put(0,3.5){\makebox(1,1)[r]{$\bullet$}}\put(0,0){\makebox(1,1)[l]{$A$}}

\put(60,123.5){\makebox(1,1)[r]{$\bullet$}}\put(60,126){\makebox(1,1)[l]{$B_1$}}
\put(30.5,123.5){\makebox(1,1)[r]{$\bullet$}}\put(30,126){\makebox(1,1)[l]{$B_2$}}
\put(0,123.5){\makebox(1,1)[r]{$\bullet$}}\put(0,126){\makebox(1,1)[l]{$B$}}

\thicklines
\put(120,4){\line(0,1){120}}
\put(0,4){\line(1,0){120}}
\put(0,124){\line(1,0){120}}
\put(0,4){\line(0,1){120}}

\put(120,94){\makebox(1,1)[r]{$\bullet$}}\put(122,94){\makebox(1,1)[l]{$C_1$}}
\put(60,49){\makebox(1,1)[r]{$\bullet$}}\put(60,47){\makebox(1,1)[l]{$D_1$}}
\put(40.5,74){\makebox(1,1)[r]{$\bullet$}}\put(42,76){\makebox(1,1)[l]{$C_2$}}
\put(30.5,57){\makebox(1,1)[r]{$\bullet$}}\put(30,55){\makebox(1,1)[l]{$D_2$}}
\put(21,70){\makebox(1,1)[r]{$\bullet$}}\put(21,72){\makebox(1,1)[l]{$C_3$}}

\put(120,50){\makebox(1,1)[r]{$\bullet$}}\put(122,47){\makebox(1,1)[l]{$E_1$}}
\put(60,86.5){\makebox(1,1)[r]{$\bullet$}}\put(60,88){\makebox(1,1)[l]{$F_1$}}
\put(40.5,39){\makebox(1,1)[r]{$\bullet$}}\put(40,37){\makebox(1,1)[l]{$E_2$}}
\put(30.5,90){\makebox(1,1)[r]{$\bullet$}}\put(30,93){\makebox(1,1)[l]{$F_2$}}
\put(21,37){\makebox(1,1)[r]{$\bullet$}}\put(21,35){\makebox(1,1)[l]{$E_3$}}

\thinlines
\put(60,124){\line(2,-1){60}}
\put(60,124){\line(-2,-5){20}}
\put(30,124){\line(1,-5){10}}
\put(30,124){\line(-1,-6){9}}

\put(59,87){\line(5,-3){60}}
\put(59,87){\line(-2,-5){19}}
\put(30,90){\line(1,-5){10}}
\put(30,90){\line(-1,-6){8.75}}

\put(60,49){\line(4,-3){60}}
\put(60,49){\line(-2,-5){18}}
\put(30,57){\line(1,-5){10.5}}
\put(30,57){\line(-1,-6){8.75}}


\end{picture}

\begin{center}
Figure 2: $S(1/2,1/4)$
\end{center}

\medskip
The first lemma is easy to prove and we
therefore
omit its proof.

\begin{lmm}\label{SC(S^1)}
Let 
$\vep , \vep ' \in (0,1)$. 
Then
the spaces $S(0, \vep )$ and $S(0, \vep ')$ are homeomorphic and 
$S(0, 1)$ is homotopy equivalent to $S(0, \vep )$. 
\end{lmm}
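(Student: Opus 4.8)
\emph{The plan} is to prove part (1) by writing down an explicit homeomorphism, exploiting the fact that for $y=0$ the parameter $\vep$ enters the construction only through the second coordinate of the vertices, and there only affinely. First I would record this: with $y=0$ the points $A_n,B_n$ are independent of $\vep$, while $C_n,D_n,\ov{E}_n,\ov{F}_n$ have $x$- and $z$-coordinates independent of $\vep$ and $y$-coordinates of the form $\vep\cdot(\text{const}_n)$ near the bottom (for $C_n,\ov{E}_n$) and $1-\vep\cdot(\text{const}_n)$ near the top (for $D_n,\ov{F}_n$). Consequently the combinatorial type of the family $\{H_n\}$ — which vertices span which faces, and how consecutive cells meet along the triangular cross-sections $A_nC_n\ov{E}_n$ and $B_nD_n\ov{F}_n$ — is the same for every $\vep\in(0,1)$, the vertices merely moving affinely with $\vep$.

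The homeomorphism $S(0,\vep)\to S(0,\vep')$ I would build to preserve $x$ and $z$ and be affine in $y$ on each cell. On the strip carrying $H_{2n-1}$ I set
\[
(x,y,z)\ \longmapsto\ \bigl(x,\ \tfrac{\vep'}{\vep}\,y+c_{2n-1}(x),\ z\bigr),
\]
where $c_{2n-1}$ is the affine function of $x$ with $c_{2n-1}(\tfrac{1}{2n-1})=0$ and $c_{2n-1}(\tfrac{1}{2n})=1-\tfrac{\vep'}{\vep}$, and analogously on $H_{2n}$. A direct check shows this sends each of the six vertices of a cell to its counterpart with parameter $\vep'$, hence maps $H_n(\vep)$ affinely onto $H_n(\vep')$ and $\partial H_\infty(\vep)$ onto $\partial H_\infty(\vep')$. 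Since at $x=\tfrac{1}{2n}$ and at $x=\tfrac{1}{2n-1}$ the neighbouring cell formulas give the same reparametrization of $y$, the cell maps agree on the shared cross-sections and glue to a continuous map.

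Two points then remain, and they are the only non-formal places in part (1). The first is continuity along the accumulation segment $\{0\}\times\II\times\{0\}$ onto which all cells degenerate: writing a point of $H_{2n-1}$ as a convex combination $tB+(1-t)T$ of a point $B$ of the bottom triangle and $T$ of the top triangle, one computes that the $y$-coordinate of its image tends to $1-t$, which is exactly the limiting $y$-coordinate of the point itself, so the map extends by the identity on the segment and is continuous there. The second is the extension across the parts of $\II^2\times\{0\}$ not covered by the tube shadows: the map already preserves $x$ and is a monotone affine map of $y$ on the shadow interval of each fibre $\{x\}\times\II$, so I extend it fibrewise to a homeomorphism of $\II$ fixing $0$ and $1$, continuously in $x$. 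As the whole map respects the removed open triangle $\Delta A_1C_1\ov{E}_1$, it descends to the desired homeomorphism $S(0,\vep)\cong S(0,\vep')$.

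For part (2) the same shear is available with $\vep'=1$, but now $C_1=(1,1)$ lands on the corner of the square, so the local structure there changes and one cannot expect a homeomorphism — which is precisely why the statement claims only homotopy equivalence at $\vep=1$. \emph{The hardest, least formal step} is therefore isolating this single degeneration: I would show that $S(0,1)$ and a fixed $S(0,\vep)$ with $\vep<1$ differ only within a contractible neighbourhood of $(1,1)$ (a triangular flap of the square together with the adjacent end cross-section), and then produce a deformation retraction, localized there, of one onto a copy of the other. Since the locus of difference is contractible, collapsing it is a homotopy equivalence, giving $S(0,1)\simeq S(0,\vep)$; combined with part (1) this yields the full statement.
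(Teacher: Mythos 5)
The paper offers no proof of this lemma to compare against --- the authors dismiss it as ``easy to prove'' and omit the argument --- so your proposal has to stand on its own. Your part (1) does: for $y=0$ every vertex has $\vep$-independent $x$- and $z$-coordinates and a $y$-coordinate affine in $\vep$, so your cellwise shear sends the six vertices of $H_n(\vep)$ to those of $H_n(\vep')$, is affine and hence carries $H_n(\vep)$ onto $H_n(\vep')$, and consecutive cell maps agree on the shared cross-section triangles. The two checks you flag are indeed the only delicate ones, and both go through: corresponding vertices of the $n$-th cell differ by $O(1/n)$, and an affine map of a polytope displaces no point farther than it displaces its vertices, so the glued map extends by the identity across the limit segment $\{0\}\times\II\times\{0\}$; the fibrewise monotone extension over the square is continuous for the same reason, and the inverse is the same construction with $\vep$ and $\vep'$ interchanged.

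Part (2), however, has a genuine gap, in two respects. First, the localization claim is false as stated: as subsets of $\RR^3$, $S(0,1)$ and $S(0,\vep)$ differ along the entire tube, not only near $(1,1)$. What is true --- and your own shear already gives it --- is that the part-(1) map with $\vep'=1$ is still defined, continuous and surjective; it is injective except that the fibrewise extension at $x=1$ collapses the arc $J=\{1\}\times[\vep,1]\times\{0\}$ onto the corner $(1,1,0)$ (the shadow interval $[0,\vep]$ is stretched over all of $[0,1]$ there). Being a continuous surjection from a compactum onto a Hausdorff space, it is a quotient map, so $S(0,1)\cong S(0,\vep)/J$. This, rather than a deformation retraction of one space onto ``a copy of the other'', is the correct mechanism; no such copy is evidently available, since $S(0,1)$ arises as a quotient of $S(0,\vep)$, not as a subspace. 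Second, your concluding principle ``the locus of difference is contractible, so collapsing it is a homotopy equivalence'' is not valid without a cofibration hypothesis, and this very paper supplies the counterexample: the Griffiths space $\mathcal{G}$ is the union of two contractible cones meeting at a point, and collapsing one of them --- a compact contractible subspace --- yields a cone, which is contractible, although $\mathcal{G}$ is not even simply connected. So precisely in this circle of examples that step must be earned. It is earned here because $J$ is a compact subpolyhedron lying at positive distance from the limit segment, i.e.\ inside the locally finite polyhedral part of $S(0,\vep)$; hence the pair $(S(0,\vep),J)$ has the homotopy extension property, and collapsing a contractible cofibred subspace is a homotopy equivalence. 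With these two repairs your outline becomes a complete proof.
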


\begin{lmm}\label{lmm:JG}
If $0< y \le 1/2$ and $0 < y+\vep \le 1$, the space $S(y,\vep )$ is
 homotopy equivalent to $S(1/2, 0).$ 
\end{lmm}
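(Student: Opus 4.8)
The plan is first to read off the topology of $S(y,\vep)$ and then to isolate the only parameter-dependent feature that can affect its homotopy type. The square $\II^2\times\{0\}$ is contractible, and $\partial H_\infty\sm\De A_1C_1\ov E_1$ is a ``tent'' erected over the PL topologist sine curve: its two sheets descend from the common ridge $\ov E_1\ov F_1\ov E_2\ov F_2\cdots$ and are glued to the square along the two zigzag arcs $\gamma_1=A_1B_1A_2B_2\cdots$ and $\gamma_2=C_1D_1C_2D_2\cdots$. The $z$-heights of the tent are independent of $(y,\vep)$; the parameters enter only through the $y$-coordinates of $C_n,D_n,\ov E_n,\ov F_n$. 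The one genuinely topological consequence is the position of the limit sets on the left edge: $\gamma_1$ always accumulates on all of $\{0\}\times\II\times\{0\}$, whereas $\gamma_2$ accumulates on the subsegment $J=\{0\}\times[y,1-y]\times\{0\}$, which shrinks to the single point $(0,1/2,0)$ exactly when $y=1/2$.

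I would first dispose of $\vep$. By a PL self-homeomorphism of $\RR^3$ that is the identity on $\II\times\{0,1\}\times\{0\}$ and straightens the $\vep$-shear of the vertices $C_n,D_n,\ov E_n,\ov F_n$ --- the same kind of straightening used for Lemma~\ref{SC(S^1)} --- one obtains $S(y,\vep)\cong S(y,0)$, reducing the lemma to comparing $S(y,0)$ with $S(1/2,0)$.

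The core step is to build a map $\varphi\colon S(y,0)\to S(1/2,0)$ realizing the passage $y\nearrow 1/2$. On the square I would take $\varphi$ to be a homeomorphism of the open square $(0,1)\times(0,1)$ that carries the finite pieces of $\gamma_2^{(y)}$ onto those of $\gamma_2^{(1/2)}$ and the inner sheet and ridge onto their $y=1/2$ counterparts, and whose continuous extension to the closed square collapses $J$ to $(0,1/2,0)$ (stretching $[0,y]$ and $[1-y,1]$ onto $[0,1/2]$ and $[1/2,1]$). Because any map between contractible spaces is a homotopy equivalence, $\varphi$ is one on the square; the content is that $\varphi$ respects the gluings along $\gamma_1$, $\gamma_2$ and the ridge, so it induces a continuous surjection of the total spaces that is a homeomorphism over $\{x>0\}$ and collapses $J$ over $\{x=0\}$.

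The hard part will be proving that this collapse is a genuine homotopy equivalence. The difficulty is that $J$ lies in the limit set not only of the inner sheet but also of the outer sheet along $\gamma_1$, so $(S(y,0),J)$ is not a cofibered pair and the gluing theorem for homotopy equivalences is unavailable. I would meet this by writing $\varphi$ and a candidate homotopy inverse $\psi$ by explicit formulas on the cofinal family of tube segments $H_k$ and verifying $\varphi\psi\simeq\op{id}$ and $\psi\varphi\simeq\op{id}$ directly at the left edge. The key estimate is that the sheets lying over $\{x\le 1/k\}$ have diameter tending to $0$, so the infinitely many sheets accumulating on $J$ are moved uniformly and all the maps and homotopies extend continuously across the limit edge. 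This uniform-shrinking control, rather than any abstract cofibration argument, is what makes the parameter collapse a homotopy equivalence.
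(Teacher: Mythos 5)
Your setup is fine as far as it goes: killing $\vep$ by a straightening homeomorphism is exactly what the paper does, your collapse map $\varphi$ goes in the only direction in which such a map has any chance of existing, and you correctly identify that $(S(y,0),J)$ is not a cofibered pair, so no gluing theorem is available. (A minor geometric slip: the two sheets of the tent are glued to the square along the curves $A_1D_1A_2D_2\cdots$ and $C_1B_1C_2B_2\cdots$, which are the boundary edges of the quadrilateral floors of the $H_n$; the arcs $A_1B_1A_2B_2\cdots$ and $C_1D_1C_2D_2\cdots$ are diagonals of those floors. Also, even the continuity of $\varphi$ at the edge is not automatic: the strip-by-strip homeomorphisms must be chosen so that their height distortion converges to the collapse map.) The fatal problem is your ``key estimate'': it is false. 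Neither the portion of the tent over $\{x\le 1/k\}$ nor any single sheet in it has small diameter: every sheet over a strip $[1/(m+1),1/m]$ contains an attaching arc running from a point of height $0$ to a point of height $1-y\ge 1/2$ (or from height $y$ to height $1$), so all of the infinitely many sheets accumulating on the left edge have diameter at least $1/2$, and they converge in the Hausdorff sense to segments of $\{0\}\times\II$, not to points. This non-shrinking is the whole point of the example --- it is why $S(y,\vep)$ has the homotopy type of the Griffiths space $\mathcal{G}$ rather than of a point --- so no smallness argument can force your homotopies to extend across the left edge. Worse, the statement you hope to get this way, that collapsing the arc $J$ preserves homotopy type, is precisely the kind of statement that fails generically for such spaces: $\mathcal{G}$ itself, which is not simply connected, is obtained from a contractible space (two cones over $\mathbb{H}$ joined at their bad points by an arc) by collapsing that arc. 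So the equivalence cannot come for free; it has to be manufactured.

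A second, related gap: you never say what the homotopy inverse $\psi$ is, and it cannot have the same shape as $\varphi$. As the paper observes, a structure-respecting homeomorphism defined off the left edge in the direction $S(1/2,0)\to S(y,0)$ sends the points $C_n$ (height $1/2$) to points of height near $y$ and the points $D_n$ (also height $1/2$) to points of height near $1-y$; since $C_n$ and $D_n$ converge to the same point $(0,1/2,0)$, no continuous extension exists, so the inverse direction must genuinely crush. The paper's inverse-direction map $\vp:S(1/2,0)\to S(1/3,0)$ is the identity on the square and flattens onto the square those parts of the tent of $S(1/2,0)$ over which $S(1/3,0)$ has no tent, while its analogue of your $\varphi$ is the map $\psi:S(1/3,0)\to S(1/2,0)$ (which, unlike yours, also collapses whole triangles to points). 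The mechanism that then makes $\op{id}\simeq\psi\vp$ and $\op{id}\simeq\vp\psi$ extend over $\{0\}\times\II$ is not smallness of the tracks --- they have length about $1/6$ arbitrarily close to the edge --- but consistency: the homotopies are built so that the $y$-coordinate of a moving point depends only on the $y$-coordinate of its initial position, and so that each strip $p^{-1}([1/(n+1),1/n]\times\II)$ is mapped into itself throughout. With that, all sheets accumulating over a point of the left edge move in unison with that limit point, which is exactly what continuity of the extension requires. If you replace your shrinking estimate by this consistency requirement and supply the crushing inverse, your outline becomes, in substance, the paper's proof.
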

\begin{proof}
It is easy to see that $S(y,\vep )$ and $S(y,0)$ are homeomorphic and so
 we only 
 need to
 prove that $S(y,0)$ for $0< y < 1/2$ and $S(1/2,0)$ are
 homotopy equivalent. 
 (Without any loss of generality we may assume that $y=1/3$.)
 
Since there might be some confusion regarding the homotopy equivalence,
 we explain this first. 
 Let $A_n$, $B_n$, $C_n$, $D_n$,... be the notation for $S(1/2,0)$ and
 $C'_n$, $D'_n$,...
 be the corresponding notation for $S(1/3,0)$. 
 
If we remove $\{ 0\}\times \mathbb{I}$ from $S(1/2,0)$ and $S(1/3,0)$,
 then the resulting spaces
 are homeomorphic, that is, $S(1/2,0)\sm 
\{ 0\}\times \mathbb{I}$ and $S(1/3,0)\setminus \{ 0\}\times \mathbb{I}$ are
 homeomorphic. However, this homeomorphism cannot be extended
 over to $S(1/2,0)$,
 since the homeomorphism maps $C_n$ to $C'_n$ and $D_n$ to
 $D'_n$, that is, upwards for $C_n$ and downwards for $D_n$,
 with respect to the $y$-coordinate. Conversely, if we construct a
 homotopy on $S(1/2,0)\sm \{ 0\}\times \mathbb{I}$ or 
$S(1/3,0)\setminus \{ 0\}\times \mathbb{I}$,
whose projection to the $y$-coordinate only depends on the
 $y$-coordinate on the domain, it extends on $SC(1/2,0)$ or
 $SC(1/3,0)$.

We define $\vp :S(1/2,0)\to S(1/3,0)$ and $\psi :S(1/2,0)\to S(1/3,0)$
 piecewise linearly as follows:

Let $\vp (x,y,0) = (x,y,0)$ and $\vp (x,y,z) = (x,y,\vp _2(x,y,z))$, for
$z>0$, where $\vp _2(x,y,z) > 0$ if and only if $z>0$ and there exists
 $z'>0$ such that $(x,y,z')\in S(1/3,0)$. 
Let 
\[
 \psi _1(y) = 
\left\{
\begin{array}{ll}
 3y/2, & \quad \mbox{ for }0\le y\le 1/3\\
 1/2, & \quad \mbox{ for }1/3\le y\le 2/3\\
 3y/2-1/2, & \quad \mbox{ for }2/3\le y\le 1. 
\end{array}
\right .
\]
and $\psi (x,y,z) = (\psi _0(x,y,z),\psi _1(y),\psi _2(x,y,z))$,
where 
$\psi _2(x,y,0) = 0$ and $\psi _2(x,y,z)>0$, for $z>0$ and $\psi _0(x,y,z)$ 
is defined as we explain using Figure 3 in the 
sequel.

Figure 3 demonstrates how $[\displaystyle{\frac{1}{2n+1}, \ \frac{1}{2n}}]
\times \II$ of $S(1/2,0)$ and $S(1/3,0)$ are mapped by $\vp$ and
 $\psi$.

\setlength{\unitlength}{0.7mm}
\begin{picture}(120,100)
\put(48,0){\makebox(1,1)[r]{$\bullet$}}
\put(0,0){\makebox(1,1)[r]{$\bullet$}}
\put(48,0){\makebox(1,1)[r]{$\bullet$}}
\put(0,96){\makebox(1,1)[r]{$\bullet$}}
\put(48,96){\makebox(1,1)[r]{$\bullet$}}
\put(0,48){\makebox(1,1)[r]{$\bullet$}}
\put(48,48){\makebox(1,1)[r]{$\bullet$}}
\put(-8,-2){\shortstack{$A_{n+1}$}}
\put(-8,46){\shortstack{$C_{n+1}$}}
\put(49,96){\shortstack{$B_{n}$}}
\put(49,48){\shortstack{$D_{n}$}}

\thicklines
\put(0,0.5){\line(1,0){48}}
\put(0,0.5){\line(1,1){48}}
\put(0,0){\line(0,1){96}}
\put(0,96.5){\line(1,0){48}}
\put(48,0){\line(0,1){96}}
\put(0,48.5){\line(1,1){48}}

\thinlines
\put(0,0.5){\line(3,4){48}}
\put(0,64.5){\line(1,0){24}}
\put(0,32.5){\line(3,4){48}}
\put(24,32.5){\line(1,0){24}}
\put(3,63){\line(0,-1){24}}
\put(6,63){\line(0,-1){20}}
\put(9,63){\line(0,-1){16}}
\put(12,63){\line(0,-1){12}}
\put(15,63){\line(0,-1){8}}
\put(18,63){\line(0,-1){4}}
\put(21,63){\line(0,-1){1}}

\put(27,34){\line(0,1){1}}
\put(30,34){\line(0,1){4}}
\put(33,34){\line(0,1){8}}
\put(36,34){\line(0,1){12}}
\put(39,34){\line(0,1){16}}
\put(42,34){\line(0,1){20}}
\put(45,34){\line(0,1){24}}


\put(68,48){\makebox(1,1)[r]{$\bullet$}}
\put(116,48){\makebox(1,1)[r]{$\bullet$}}

\put(116,0){\makebox(1,1)[r]{$\bullet$}}
\put(68,0){\makebox(1,1)[r]{$\bullet$}}
\put(116,0){\makebox(1,1)[r]{$\bullet$}}
\put(68,96){\makebox(1,1)[r]{$\bullet$}}
\put(116,96){\makebox(1,1)[r]{$\bullet$}}
\put(60,-2){\shortstack{$A'_{n+1}$}}
\put(60,32){\shortstack{$C'_{n+1}$}}
\put(118,64){\shortstack{$D'_{n}$}}
\put(118,96){\shortstack{$B'_{n}$}}
\put(88,67){\shortstack{$N'_{n}$}}
\put(93,28){\shortstack{$M'_{n}$}}


\thicklines
\put(68,0.5){\line(1,0){48}}
\put(68,0.5){\line(3,4){48}}
\put(68,0){\line(0,1){96}}
\put(68,96.5){\line(1,0){48}}
\put(116,0){\line(0,1){96}}
\put(68,32.5){\line(3,4){48}}

\thinlines
\put(84,32.5){\line(3,4){24}}
\put(68,0.5){\line(1,2){16}}
\put(108,64.5){\line(1,2){8}}
\put(116,96){\line(-1,-2){16}}
\put(100,64){\line(-3,-4){24}}
\put(76,32){\line(-1,-2){8}}

\put(68,64.5){\line(1,0){24}}
\put(92,32.5){\line(1,0){24}}
\put(71,63){\line(0,-1){24}}
\put(74,63){\line(0,-1){20}}
\put(77,63){\line(0,-1){16}}
\put(80,63){\line(0,-1){12}}
\put(83,63){\line(0,-1){8}}
\put(86,63){\line(0,-1){4}}
\put(89,63){\line(0,-1){1}}

\put(95,34){\line(0,1){1}}
\put(98,34){\line(0,1){4}}
\put(101,34){\line(0,1){8}}
\put(104,34){\line(0,1){12}}
\put(107,34){\line(0,1){16}}
\put(110,34){\line(0,1){20}}
\put(113,34){\line(0,1){24}}
\end{picture}

\smallskip
\begin{center}
Figure 3 : Parts of $S(1/2,0)$ and $S(1/3,0)$
\end{center}
\medskip

First we explain the map
$\psi$. The two shadowed triangles are mapped to
 $C_{n+1}$ or $D_n$, respectively. Accordingly,
 the segments
 $B'_{n}C'_{n+1}$ and $D'_nA'_{n+1}$ are mapped onto $B_nC_{n+1}$ and
 $D_nA_{n+1}$ respectively. 
The segments $N'_nD_n$ and $C'_{n+1}M'_n$ are mapped bijectively to
 $C_{n+1}D_n$.

Next we explain the map
$\vp\psi$. The two shadowed triangles are mapped to 
$\vp (C_{n+1})$ or $\vp (D_n)$, which are 
the dotted point. The two 
bending segments are mapped onto $C'_{n+1}B'_n$ or $A'_{n+1}D'_n$.

Last we explain the map
$\psi\vp$. The two shadowed triangles are mapped to 
$C_{n+1}$ or $D_n$. The two segments having slope greater than $1$ are
 mapped to $C_{n+1}B_n$ or $A_{n+1}D_n$.

We have a homotopy $H(x,y,z,t)$ on $S(1/2,0)\sm (\{ 0\}\times \II )$
such that:
\begin{itemize}
\item[(1)] $H(x,y,z,0) = (x,y,z)$ and $H(x,y,z,1)=\psi\vp(x,y,z)$;  
\item[(2)] for the $y$-coordinate $H_1(x,y,z,t)$ of $H(x,y,z,t)$, 
\[
 H_1(x,y,z,t) = 
\left\{
\begin{array}{ll}
 y+yt/2, & \quad \mbox{ for }0\le y\le 1/3\\
 y+t/2-yt, & \quad \mbox{ for }1/3\le y\le 2/3\\
 y-t/2+yt/2, & \quad \mbox{ for }2/3\le y\le 1\mbox{;} 
\end{array}
\right .
\]
\item[(3)] $H(*,*,*,t)$ maps $p^{-1}([\displaystyle{\frac{1}{n+1}, 
\ \frac{1}{n}}] \times \II )$ onto itself for each $n$. 
\end{itemize}
Then we can extend $H(*,*,*,t)$ to $S(1/2,0)$ uniquely and
 continuously.

Concerning $S(1/3,0)$ with $\vp\psi$,
we have a homotopy with the same
 properties as above and we now see
 that $S(1/2,0)$ and $S(1/3,0)$ are
 homotopy equivalent.  
\end{proof}
\begin{thm}\label{thm:main2}
Suppose that $0\le y \le 1$, $\vep \ge 0$ and $0 < y+\vep\le 1$.  
Then the following assertions hold:
\begin{itemize}
\item[(1)] For every $1/2 <y\le 1$, the spaces $S(1, 0)$ and $S(y,\vep )$  are
           contractible;
\item[(2)] For $y=0$, the space $S(y, \vep )$  is homotopy equivalent to
           $SC(S^1)$; and
\item[(3)] For every $0< y \le 1/2$, the space $S(y, \vep )$ is homotopy equivalent to the space $\mathcal{G}$ . 
\end{itemize}
\end{thm}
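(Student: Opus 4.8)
The plan is to treat the three ranges of $y$ separately, exploiting the two preceding lemmas to reduce each case to a single representative space which I then analyze directly from the description of $S(y,\vep)$ as the tube $\partial H_\infty$ running along the zigzag $A_1B_1A_2B_2\cdots$ over the square $\II^2\times\{0\}$. The organizing principle is the location of the ``waist'' points $C_n$ (of $y$-coordinate close to $y$) and $D_n$ (of $y$-coordinate close to $1-y$) relative to the midline $\{y=1/2\}$: for $y=0$ the tube cross-sections are full loops spanning the square, for $0<y\le 1/2$ the tube pinches toward the midline, and for $y>1/2$ the upper and lower portions of the tube cross over and overlap across the midline. Each part of the theorem corresponds to one of these three regimes.

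For assertion $(2)$, Lemma~\ref{SC(S^1)} lets me fix one convenient value of $\vep$, since all the spaces $S(0,\vep)$ share a homotopy type. I would then exhibit a homotopy equivalence matching the PL model to the original construction: over each zigzag segment the boundary of the tube is a PL annulus whose core loop plays exactly the role of a fiber $p^{-1}(\{P\})\cong S^1$ in the definition of $SC(S^1)$, and the removed triangle $\Delta A_1C_1\ov{E}_1$ corresponds to the opening needed to glue the tube to the square. The only genuine work is checking continuity of this correspondence at the limit $x=0$, where the fibers degenerate, which mirrors the collapse of $\{t\}\times S^1$ over $\{0\}\times\II$ in $SC(S^1)$.

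For assertion $(3)$, Lemma~\ref{lmm:JG} reduces the whole range $0<y\le 1/2$ to the single claim $S(1/2,0)\simeq\mathcal{G}$, and this I expect to be the main obstacle. At $y=1/2$ every $C_n$ and $D_n$ lands on the midline $\{y=1/2\}$, so the tube pinches there: the part of $S(1/2,0)$ lying in $\{y\ge 1/2\}$ has, as its waist loops, a family of circles shrinking to a point as $n\to\infty$, i.e. a copy of the Hawaiian earring $\mathbb{H}$, while the surface capping these off forms a cone $C(\mathbb{H})$; symmetrically for $\{y\le 1/2\}$. I would make this precise by deformation-retracting each half onto a copy of $C(\mathbb{H})$ and verifying that the two cones meet only along the pinch locus, which retracts to the single wedge point $\theta$, thereby exhibiting $S(1/2,0)$ as the one-point union of two cones over $\mathbb{H}$. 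The delicate step is establishing that these explicit retractions are continuous up to $x=0$, where the loops degenerate and where the removed triangle sits; this is precisely the part of the argument that distinguishes $\mathcal{G}$ from a contractible or simply connected space.

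For assertion $(1)$, I would first contract $S(1,0)$ directly: with $y=1$ and $\vep=0$ the tube spans the full height and can be flattened onto the square $\II^2\times\{0\}$, which is contractible. For the general range $1/2<y\le 1$ the decisive feature is that $C_n$ now lies strictly above and $D_n$ strictly below the midline, so consecutive cross-sections of the tube overlap across $\{y=1/2\}$; this overlap lets every cross-sectional loop be slid off, and I would package it into an explicit piecewise-linear deformation, modeled on the homotopies $H$ of Lemma~\ref{lmm:JG}, showing $S(y,\vep)\simeq S(1,0)$ and hence contractible. Of the three parts, $(3)$ is the one demanding the most careful limit analysis, since it is there that the full Hawaiian-earring-and-cone structure of $\mathcal{G}$ must be recovered from the piecewise-linear model.
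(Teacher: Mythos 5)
Your reductions via Lemmas~\ref{SC(S^1)} and~\ref{lmm:JG} are fine, and for parts (1) and (2) your sketch is at essentially the same level of detail as the paper, which dismisses them as easy. The genuine gap is in part (3) --- the only part the paper actually proves --- where both of your key geometric claims fail. Work with $S(1/2,0)$ as you propose: then \emph{both} families of waist vertices $C_n=(\frac{1}{2n-1},\frac12)$ and $D_n=(\frac{1}{2n},\frac12)$ lie on the midline, and these are exactly the points where the tube surface touches the plane $z=0$ near its waist. Consequently the intersection of your two halves, $p^{-1}(\II\times\{1/2\})$, is not point-like: it is the midline segment together with the tube curve over it, which meets the segment precisely at the interleaved points $\dots,D_2,C_2,D_1,C_1$, i.e.\ an infinite chain of circles converging to $(0,1/2)$. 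This set is homotopy equivalent to the Hawaiian earring, so it certainly does not ``retract to the single wedge point''; in fact all the essential $1$-dimensional topology of the space sits exactly on it. Worse, each closed half is itself non-contractible, so no deformation retraction onto the contractible cone $C(\mathbb{H})$ can exist: the cap of tube over the triangle $C_nB_nC_{n+1}$ meets the upper half-square not only along the tent $C_nB_n\cup B_nC_{n+1}$ but also at the isolated extra point $D_n$, and the loop running along the bottom edge of that cap from $C_n$ to $D_n$ and back along the midline is essential. (To see this, retract the whole upper half onto the union of the half-square with the first cap by projecting every other cap vertically onto its base; the image is a compact polyhedron, two contractible pieces glued along an arc plus one extra point, whose fundamental group is $\ZZ$.) So the step ``deformation-retract each half onto $C(\mathbb{H})$'' is not merely delicate at $x=0$; it is impossible.

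The paper's proof is engineered precisely to evade this obstruction, in two steps absent from your outline. First, it works with $S(1/2,1/4)$ and deletes the open wedge $\Delta=\{(x,y)\mid x\in(0,1],\ |y-1/2|<x/4\}$, whose two edges contain all the $C_n$ (upper edge) and all the $D_n$ (lower edge); it shows $p^{-1}(\II^2\setminus\Delta)$ is a strong deformation retract of $S(1/2,1/4)$. This retraction is available only because the open triangle $A_1C_1\ov{E}_1$ was removed from the tube: its boundary is the free edge along which the strip of tube lying over $\Delta$ can be pushed onto the curves over $\partial\Delta$. The effect is to cut every waist circle, after which the two remaining pieces, lying over $\{y\ge x/4+1/2\}$ and $\{y\le -x/4+1/2\}$, are each contractible and meet in exactly one point, the apex $(0,1/2)$. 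Second --- and this is a point your plan also glosses over --- being a one-point union of two contractible spaces does \emph{not} determine the homotopy type (that is the whole Griffiths phenomenon), so the paper must still identify this wedge with $\mathcal{G}$; it does so by collapsing an explicit fan of segments filling the two regions, obtaining a quotient homeomorphic to $C(\mathbb{H})\vee C(\mathbb{H})$ with the quotient map a homotopy equivalence. Any correct proof of (3) needs substitutes for both steps: a device that severs the waist circles so the two ``cones'' meet in a single point, and an explicit identification of the resulting wedge with $\mathcal{G}$.
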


\begin{proof}
The statements (1) and (2) are easy to verify. 
Therefore 
we shall only prove (3).

By Lemma~\ref{lmm:JG},
it suffices to show that $S(1/2,1/4)$ is homotopy
 equivalent to the space $\mathcal{G}$. 
Let $\Delta$ be 
the
half-open triangle, defined as 
$\Delta = \{ (x,y)\ |\ x\in (0, 1],y\in (-x/4 +1/2, x/4 + 1/2)\}$. 
Then $p^{-1}(\II ^2\setminus \Delta)$ is a strong deformation retract of
 $S(1/2,1/4)$.

Identifying 
$\{(x, y)\ |\ y= a + (1-a)x/4, x\in \II\}$ as one point for $a\in [1/2,1]$ 
and
$\{(x, y)\ |\ y= a - ax/2, x\in \II \}$ as one point for $a\in [0,1/2]$,
 we get the quotient space of $p^{-1}(\II ^2\setminus \Delta)$, 
which is homeomorphic to $\mathcal{G}$. 
Now the homotopy equivalence between $p^{-1}(\II ^2\setminus \Delta)$
 and $\mathcal{G}$ is evident and so $S(1/2,1/4)$ is indeed
homotopy equivalent to $\mathcal{G}$.
\end{proof}
\begin{rmk} 
{\rm
The space $SC(S^1)$ is simply connected 
(see \cite{EKR2}),
 whereas the space $\mathcal{G}$ is not simply connected 
 (see \cite{GJ}). 
We remark that
$H_2(\mathcal{G}) = \{ 0\}$, which contrasts with
 Theorem~\ref{cor:circle}.

To show this, we introduce some notation. Since the cone $C(X)$ over
the space $X$ is the quotient space of $X\times \II$,
 obtained by identifying $X\times \{ 1\}$ to a point, 
we let $p:X \times \II \to C(X)$ be the canonical projection.

For a subset $A$ of $\II$, let $C_A(X) = p(X \times A) \subset C(X)$.
Let $\mathbb{H}_1$ and $\mathbb{H}_2$ be copies of the Hawaiian earring
 $\mathbb{H}$ and $\mathcal{G} = C(\mathbb{H}_1)\vee C(\mathbb{H}_2)$ be the one
 point union of $C(\mathbb{H}_1)$ and $C(\mathbb{H}_2)$ defined in
 Section~\ref{sec:preliminary}. 
Let $X_1$ be the disjoint union of $C_{(1/3,1]}(\mathbb{H}_1)$ and
 $C_{(1/3,1]}(\mathbb{H}_2)$ and $X_2$ be $C_{[0,2/3)}(\mathbb{H}_1)\vee 
C_{[0,2/3)}(\mathbb{H}_2)$.

Then  $\mathcal{G} = X_1\cup X_2$ and
we have
the following part of the
 Mayer-Vietoris sequence:
\[
H_2(X_1) \oplus H_2(X_2) \longrightarrow 
H_2(\mathcal{G}) \overset{\partial}\longrightarrow 
H_1(X_1\cap X_2)\overset{h}\longrightarrow 
H_1(X_1)\oplus H_1(X_2). 
\]
Obviously,
$H_2(X_1) = \{ 0\}$. 
Since $X_2$ is homotopy equivalent to
 $\mathbb{H}_1\vee \mathbb{H}_2$ which is a 1-dimensional compact metric
 space, $H_2(X_2)$ is trivial \cite{CF}. 
 Hence $\partial$ is injective. 
We observe that $X_1\cap X_2$ is the disjoint union of
 $C_{(1/3,2/3)}(\mathbb{H}_1)$ and $C_{(1/3,2/3)}(\mathbb{H}_2)$.

Since $C_{[1/3,2/3)}(\mathbb{H}_1)$ and $C_{[1/3,2/3)}(\mathbb{H}_2)$ are
 retracts of $C_{[0,2/3)}(\mathbb{H}_1)\vee C_{[0,2/3)}(\mathbb{H}_2)$
 and are homotopy equivalent to $C_{(1/3,2/3)}(\mathbb{H}_1)$ and
 $C_{(1/3,2/3)}(\mathbb{H}_2)$ respectively, it follows that $h$ is
 injective. Therefore we obtain that $H_2(\mathcal{G}) = \{ 0\}$. 
}
\end{rmk}
\begin{prb}
Does there exist a finite-dimensional non-contractible Peano
continuum all homotopy groups of which are trivial?
\end{prb}

\begin{rmk} 
Recently we have strengthened Theorem~\ref{thm:main1}(2) 
by proving the following: 
If $X$ is 
simply connected, 
then $\pi _2(SC(X))$ is trivial. 
We have proved earlier  that
$SC(X)$ is also simply connected \cite{EKR2}. 
Therefore by
Theorem~\ref{thm:main1}(1)
the following statements are equivalent 
for any path-connected space $X$: 
\begin{itemize}
\item[(1)] $X$ is simply connected; 
\item[(2)] $\pi _2(SC(X))$ is trivial; and
\item[(3)] $H_2(SC(X))$ is trivial. 
\end{itemize}
\end{rmk}

{\large Acknowledgements}

This paper was presented (by the third author) 
at
the special session 
{\sl Topology of Continua}
of the AMS Spring Central Section Meeting 
in Lubbock, Texas (April 8-10, 2005).
In June 2005,  during his visit to Ljubljana,
J. Dydak informed the third author that since
then together with A. Mitra they have
obtained an independent proof of
Corollary 3.2 
(their manuscript is not yet available). 
We were supported in part by the Japanese-Slovenian research grant
BI--JP/03--04/2 and the Slovenian Research Agency research project
No. J1--6128--0101--04 and program P1-0292-0101-04.

We thank the referee for several useful comments and suggestions.

\end{document}